\documentclass[a4paper,11pt]{article}
\usepackage[utf8]{inputenc}
\usepackage{amsmath,amstext,amsthm,amsfonts}
\usepackage{enumerate}

\newtheorem{theorem}{Theorem}[section]

\newtheorem{corollary}{Corollary}[section]
\newtheorem{lemma}[theorem]{Lemma}
\DeclareMathAlphabet{\mathpzc}{OT1}{pzc}{m}{it}
\newtheorem{definition}[theorem]{Definition}

\newtheorem{conjecture}[theorem]{Conjecture}

\DeclareMathOperator{\tr}{tr}

\textwidth=6.7in
\textheight=25cm
\hoffset=-60pt
\voffset=-70pt
\parskip=1pt

 \title{On the interplay between CPE metrics, vacuum static spaces and $\sigma_2$-singular spaces}
\author{Maria Andrade\footnote{The author was partially supported by PNPD/CAPES/Brazil. This work was written when the author was visiting the UFMG.}}
\date{}

\begin{document}
\maketitle

\begin{abstract}
We call CPE metrics the critical points of the total scalar curvature functional restricted to the space of metrics with constant scalar curvature of unitary volume. In this short note, we give a necessary and sufficient condition for a CPE metric to be Einstein in therms of $\sigma_2$-singular spaces. Such a result improves our understanding about CPE metrics and Besse's conjecture with a new geometric point of view. Moreover, we prove that the CPE condition can be replaced by the related vacuum static space condition to characterize closed Einstein manifolds in terms of $\sigma_2$-singular spaces.

%
\end{abstract}

{\bf Keywords:} Total scalar curvature, Critical point equation, Einstein metric, $\sigma_2$-curvature, Vacuum \\ s\-ta\-tic space

{\bf MSC:} Primary: 53C24, 53C25, Secondary: 53C20, 53C21
\vspace{0.5cm}

\section{Introduction}
\label{intro}

Scalar curvature appears in the study of Einstein metrics, since this is a type of curvature closed related with the Ricci tensor. Recall that a Riemannian manifold is said to be Einstein if the Ricci tensor is multiple of the metric $g,$ i.e., $Ric_g=\lambda g,$ where $\lambda:M\to \mathbb{R}.$  In others words, $(M^n, g)$ is Einstein if its traceless tensor 
$$\mathring{Ric_g}=Ric_g-\dfrac{R_g}{n}g$$
is identically zero, where $Ric_g$ and $R_g$ are Ricci and scalar curvatures, respectively.

 Let $(M^n,g)$ be an $n$-dimensional closed (compact without boundary) oriented manifold with $n\geq 3,$ $ \mathcal{M}$ be the Riemannian metric space and $S_2(M)$ be the space of symmetric 2-tensors on $M.$ Fischer and Marsden \cite{fischer1975deformations} consider the scalar curvature map $\mathcal{R}:\mathcal{M}\to C^{\infty}$ which associates to each metric $g\in\mathcal{M}$ its scalar curvature. If $\gamma_g$ is the linearization of the map $\mathcal{R}$ and $\gamma^*_g$ is its $L^2$-formal adjoint, then they proved  that
\begin{eqnarray*}
 \gamma_gh=-\Delta_g\tr_gh+\delta^2_gh-\langle Ric_g,h\rangle
\end{eqnarray*}
and
\begin{eqnarray*}
 \gamma_g^*f=\nabla^2_gf-(\Delta_gf)g-fRic_g
\end{eqnarray*}
where $\delta_g=-div_g,$ $h\in S_2(M)$ and $\nabla^2_g$ is the Hessian form on $M^n$, respectively. 

In their study of the surjectivity of the scalar curvature map $\mathcal R$, Fischer and Marsden considered the so-called vacuum static equation $\gamma^*_g(f)=0$. We use the following definition.

  \begin{definition} Let $(M^n,g)$ be a complete Riemannian manifold. We say that $(M^n,g)$ is a vacuum static space if there is a nonzero smooth function $f$ on $M$ solving the following vacuum static equation
\begin{eqnarray}\label{eqvacu}
\gamma_g^*f=\nabla^2f-\Delta_gfg-Ric_g f=0.
\end{eqnarray}
We also refer $(M^n,g,f)$ as a vacuum static space.
 \end{definition}
 
In the last few decades, numerous investigations have been made on these spaces. The classification problem is a fundamental question as well as rigidity result. See \cite{ambrozio2017static}, \cite{yuan2015geometry} and the references contained therein.

 The Einstein-Hilbert functional $\mathcal{S}:\mathcal{M}\to\mathcal{R}$ is defined by
\begin{eqnarray*}\label{eq2}
 \mathcal{S}(g)=\displaystyle\int_MR_gdv_g.
\end{eqnarray*}

It is well known that the solution of the Yamabe problem shows that any compact Manifold $M^n$ admits a Riemannian metric with constant scalar curvature. In particular, the set $\mathcal{C}=\{g\in \mathcal{M}; R_g \ \text{is constant}\}\neq \emptyset.$ Thus, we can consider $\mathcal{M}_1=\{g\in \mathcal{M}; R_g\in \mathcal{C}\ \text{and $vol_g(M)=1$}\}\neq\emptyset.$

 Besse \cite{besse2007einstein} conjectured that the critical points of the total scalar curvature functional $\mathcal{S}$ restrited to $\mathcal{M}_1$ must be Einstein.  More precisely, the Euler-Lagrangian equation of Hilbert-Einstein action restricted to $\mathcal{M}_1$ may be written as the following critical point equation (CPE)
\begin{eqnarray}\label{CPE}
 \gamma_g^*f=\nabla^2_gf-(\Delta_gf)g-fRic_g=\mathring{Ric_g}.
\end{eqnarray}

In this setting, it is interesting to study the critical points of the restriction of the Einstein-Hilbert functional to $\mathcal{M}_1.$ Following the notations developed in \cite{barros2014critical} and \cite{neto2015note} we consider the following definition.

\begin{definition}
 A CPE metric is a triple $(M^n,g, f),$ where $(M^n,g)$ is a closed oriented Riemannian manifold of dimension $n\geq 3$ with constant scalar curvature and volume 1 and $f$ is a smooth potential satisfying equation \eqref{CPE}.
\end{definition}
 The Besse's conjecture (or CPE conjeture) can be rewritten as
 
\begin{conjecture}[\cite{besse2007einstein}]\label{besseconjec}
 A CPE metric is always Einstein.
\end{conjecture}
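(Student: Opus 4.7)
The plan is to reduce Besse's conjecture to the unconditional assertion that every CPE metric is $\sigma_2$-singular: combined with the necessary and sufficient condition announced in the abstract, this would close the conjecture. I would therefore aim to derive the $\sigma_2$-singular identity directly from the CPE \eqref{CPE} without invoking any auxiliary curvature hypothesis such as local conformal flatness, parallel Ricci, or harmonic Weyl tensor.

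First I would collect the standard consequences of \eqref{CPE}. Tracing and using that $R_g$ is constant yields $(n-1)\Delta_g f + R_g f = 0$, so $\int_M f\, dv_g = 0$ and $f$ must change sign. Recasting \eqref{CPE} in traceless form gives the identity $\nabla^2_g f - \tfrac{1}{n}(\Delta_g f)g = (1+f)\mathring{Ric_g}$, which identifies the traceless Hessian of $f$ with a scaled traceless Ricci tensor. Contracting this relation with $\mathring{Ric_g}$, commuting covariant derivatives to expose the full Riemann tensor, applying the twice-contracted second Bianchi identity (simplified by constancy of $R_g$), and integrating over $M$ should produce an integral identity of the schematic form $\int_M (1+f)|\mathring{Ric_g}|^2\, dv_g = -\int_M f\, Q(\mathring{Ric_g}, W)\, dv_g$, where $Q$ is a natural curvature expression containing the cubic contraction $\langle \mathring{Ric_g}\circ\mathring{Ric_g},\mathring{Ric_g}\rangle$ together with a Weyl-tensor contribution $W(\mathring{Ric_g},\mathring{Ric_g})$.

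The hard part, and the precise reason the CPE conjecture has resisted proof for decades, is the indefinite sign of $Q$. The Weyl contraction in particular cannot be controlled pointwise, and every partial result in the literature proceeds by a hypothesis that either annihilates or tames precisely this term. My route would be to repackage $Q$ through the $\sigma_2$-singular formalism: the Newton transform of the Schouten tensor produces essentially the same cubic-Ricci-plus-Weyl combination, up to a multiple of $R_g$. Testing the $\sigma_2$-singular equation against $f$ and comparing with the identity above, one hopes to exhibit $Q$ as a non-negative combination modulo exact divergences, forcing $(1+f)|\mathring{Ric_g}|^2 \equiv 0$ and hence $\mathring{Ric_g}\equiv 0$ on the open set $\{f\neq -1\}$, which should extend globally by analyticity of the CPE system. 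Once the metric is Einstein, \eqref{CPE} reduces to the vacuum static equation and a classical Obata argument identifies $(M^n,g)$ with a round sphere with $f$ a first eigenfunction. If the final cancellation cannot be achieved unconditionally, the plan reverts to the paper's conditional characterization and the conjecture remains open.
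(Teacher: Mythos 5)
This statement is a \emph{conjecture}: the paper does not prove it, and neither does your proposal. The paper's actual contribution (Theorem \ref{main}) is only the conditional equivalence that a CPE metric is Einstein if and only if its potential lies in $\ker\Lambda_g^*$, so the honest comparison is that there is no ``paper's own proof'' to match, and your attempt must be judged as a claimed proof of an open problem. The decisive step in your plan --- exhibiting the curvature expression $Q$ as a non-negative combination modulo exact divergences so as to force $(1+f)|\mathring{Ric_g}|^2\equiv 0$ --- is precisely the step you never carry out, and you concede as much in your final sentence. As written, the argument terminates in ``one hopes to exhibit,'' which is a genuine gap, not a proof.

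There is also a structural problem with the proposed reduction: proving that every CPE metric is $\sigma_2$-singular is not an easier intermediate target, because by the paper's Lemma \ref{mainlema} one has $\tr_g\Lambda_g^*(f)=\tfrac{1}{2}(n-2+nf)\,|\mathring{Ric_g}|^2$ for any CPE metric. Hence the assertion $f\in\ker\Lambda_g^*$ (even just the vanishing of the trace) already forces $|\mathring{Ric_g}|^2=0$ away from the level set $f=\tfrac{2-n}{n}$; in other words, ``every CPE metric is $\sigma_2$-singular'' is essentially a restatement of ``every CPE metric is Einstein,'' not a reduction of it. Your plan is therefore circular relative to the paper's own computations: the integral identity you propose to derive would have to contain the full content of the conjecture, and the indefinite Weyl contribution you correctly identify as the obstruction is exactly why the problem remains open for $n\geq 5$ in general. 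The correct conclusion is that the conjecture is unproved here; only the conditional characterizations of Theorems \ref{main} and \ref{t002} are established.
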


Note that the equation $\eqref{CPE}$ is equivalent to
\begin{eqnarray}\label{eqcpe1}
 \mathring{Ric_g}=\nabla^2_gf-\left(Ric_g-\dfrac{R_g}{n-1}\right)f
\end{eqnarray}
and
\begin{eqnarray}\label{eqcpen}
 (1+f)\mathring{Ric_g}=\nabla^2_gf+\dfrac{R_gf}{n(n-1)}g
\end{eqnarray}
Observe that if $f$ is a constant function and satisfies the equation (\ref{eqcpen}) then $f=0$ and this implies that $(M^n,g)$ is Einstein.
Moreover, if $(M^n,g,f)$ is a CPE metric with $f$ a non-constant function, then the set $$B=\{x\in M^n/f(x)=-1\}$$ has zero $n$-dimensional measure (see \cite{hwang2013three} and \cite{neto2015note}). Thus, to prove that CPE metric is Einstein is equivalent to show that $(g,f)$ satisfies the equation 
\begin{eqnarray}\label{eq5}
 \nabla^2_gf+\dfrac{R_gf}{n(n-1)}g=0
\end{eqnarray}
where $f$ is not a constant function. This implies that 
 \begin{eqnarray}\label{eqlaplacian}
 \Delta_gf=-\dfrac{R_g}{n-1}f.
\end{eqnarray}
Now, multiplying the equation \eqref{eqlaplacian} by $f$ and integrating over $M,$ we obtain 
\begin{eqnarray}\label{rpositive}
\dfrac{R_g}{n-1}\displaystyle\int_Mf^2dv_g=-\displaystyle\int_Mf\Delta_gfdv_g=\displaystyle\int_M|\nabla_gf|^2dv_g>0
\end{eqnarray}
since $f$ is not a constant function, so we get that $R_g>0.$

 If $f$ satisfies the equation \eqref{eq5}, then $(M^n, g)$ is isometric to $\mathbb{S}^n(r)$ where $r=\left(\frac{R_g}{n(n-1)}\right)^{1/2}$ (see \cite{tashiro1965complete}). In this way, considering $f$ non-constant, Conjecture \ref{besseconjec} can be rewritten changing Einstein property by the manifold being the canonical sphere.

Hwang \cite{hwang2013three} has proved the Besse's conjecture when $n=3$ and under the hypothesis $\ker (\gamma^*_g)\neq\{0\}.$  Barros and Ribeiro Jr. \cite{barros2014critical} proved that the Besse's conjecture is true for $4$-dimensional half locally conformally flat manifold. In \cite{barros2015critical} Barros et al proved that the conjecture \ref{besseconjec} is true for $4$-dimensional manifolds with harmonic tensor $W^+,$ where $W^+$ is called the self-dual part of the Weyl curvature tensor $W$. More recently, Neto \cite{neto2015note} proved a necessary and suficient condition for a CPE metric to be Einstein in terms of the potential function.


In this note, we give a necessary and suficient condition for a CPE metric to be Einstein for $n\geq 3,$ improving the understanding about CPE metrics and Besse's conjecture with a new geometric point of view that involves the potential function $f$ provided by the CPE condition. More precisely, we prove the following result.

\begin{theorem}\label{main}
Let $(M^n,g,f)$ be an $n$-dimensional CPE metric with non-constant pontential function $f$ and $n\geq 3$. $(M^n,g)$ is Einstein if and only if $f\in \ker\Lambda_g^*$, where $\Lambda_g:S_2(M)\to C^{\infty}(M)$ is the linearization of the $\sigma_2$-curvature and $\Lambda^*_g$ is the $L^2$-formal adjoint of the operator $\Lambda_g$, i.e., $(M^n,g,f)$ is a $\sigma_2$-singular space.
\end{theorem}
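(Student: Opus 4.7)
The plan is to derive an explicit expression for $\Lambda_g^*$ and then combine the identity $\Lambda_g^* f = 0$ with the CPE equation (\ref{CPE}). The proof then splits naturally into the two directions after a preliminary computation.

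\textbf{Step 1 (formula for $\Lambda_g^*$).} I would first write the $\sigma_2$-curvature as a polynomial in scalar and Ricci curvature. Using the Schouten tensor $A = \tfrac{1}{n-2}\bigl(Ric_g - \tfrac{R_g}{2(n-1)}g\bigr)$ and the identity $\sigma_2(A) = \tfrac{1}{2}\bigl((\tr_g A)^2 - |A|_g^2\bigr)$, one expands $\sigma_2(g) = c_1(n)\,R_g^2 - c_2(n)\,|Ric_g|^2$ with explicit dimensional constants. Linearizing the first summand with the Fischer--Marsden formula for $\gamma_g = R_g'$ and the second with the standard formula for $Ric_g'(h)$, and then integrating by parts against a test function $f$, yields an explicit expression for $\Lambda_g^* f$ as a symmetric $2$-tensor built out of $\nabla_g^2 f$, $(\Delta_g f)g$, $fRic_g$, $R_g f g$ and pointwise contractions of $Ric_g$ against $\nabla_g^2 f$.

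\textbf{Step 2 (only if).} Assume $(M^n,g)$ is Einstein and $f$ non-constant. By the discussion preceding Theorem~\ref{main}, the CPE condition together with $\mathring{Ric_g}=0$ reduces (\ref{CPE}) to $\gamma_g^* f=0$; equation (\ref{eq5}) and Tashiro's theorem then force $(M^n,g)$ to be a round sphere with $f$ a first Laplacian eigenfunction. On an Einstein background $Ric_g = \tfrac{R_g}{n} g$, so every Ricci contraction in the formula of Step~1 becomes a multiple of the metric, and $\Lambda_g^* f$ collapses to a dimensional multiple of $\gamma_g^* f$, which vanishes. Hence $f \in \ker \Lambda_g^*$.

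\textbf{Step 3 (if).} Conversely, assume $\Lambda_g^* f = 0$ and (\ref{CPE}). I would use (\ref{CPE}) in the form $\nabla_g^2 f = \mathring{Ric_g} + (\Delta_g f)g + fRic_g$ together with the trace consequence $\Delta_g f = -R_g f/(n-1)$ from (\ref{eqlaplacian}) to eliminate $\nabla_g^2 f$ and $\Delta_g f$ from the explicit formula for $\Lambda_g^* f$. After simplification the identity $\Lambda_g^* f = 0$ should reduce to an algebraic relation of the shape $\varphi(f)\,\mathring{Ric_g} = 0$; separating trace and traceless parts and using the positivity $R_g>0$ from (\ref{rpositive}) handles the pure-trace terms, while the fact that $\{f=-1\}$ has zero $n$-measure ensures $\varphi(f)$ vanishes on at most a measure-zero set, so that $\mathring{Ric_g} = 0$ on a dense open set and therefore everywhere.

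\textbf{Main obstacle.} The heart of the argument is Step~1: the adjoint of the linearization of $|Ric_g|^2$ produces both divergence-of-divergence terms and several curvature contractions, and all dimensional constants must be tracked precisely. The content of the theorem is really that, after the CPE substitution in Step~3, the non-$\mathring{Ric_g}$ contributions cancel exactly and the residual scalar factor $\varphi(f)$ is not identically zero; verifying these two algebraic cancellations is the delicate computational core of the proof.
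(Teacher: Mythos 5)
Your forward direction is fine and essentially the paper's: on an Einstein background $\Lambda_g^*f$ does collapse to $\frac{R_g(n-2)^2}{4n(n-1)}\,\gamma_g^*f$ (this is Lemma 2 of \cite{silvaandrade2018}, which the paper simply quotes), and the CPE equation with $\mathring{Ric_g}=0$ gives $\gamma_g^*f=0$; the detour through Tashiro's theorem is unnecessary but harmless.

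The gap is in your Step 3. The full tensor identity $\Lambda_g^*f=0$ does \emph{not} reduce, after substituting \eqref{CPE} and \eqref{eqlaplacian}, to an algebraic relation $\varphi(f)\,\mathring{Ric_g}=0$: the expression for $\Lambda_g^*f$ contains the terms $\tfrac12\Delta_g(fRic_g)$, $\delta^*\delta(fRic_g)$, $\tfrac12\delta^2(fRic_g)g$ and $f\mathring{R}(Ric_g)$, which after expansion leave second derivatives of the Ricci tensor and genuine curvature contractions that the CPE equation (which only controls $\nabla^2_gf$) cannot eliminate; the claimed cancellation of all ``non-$\mathring{Ric_g}$'' contributions is exactly what does not happen at the tensor level. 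The workable move --- and the paper's --- is to take only the \emph{trace}: formula \eqref{eq003} has already absorbed all derivatives of $Ric_g$ (via Bianchi-type cancellations) and is purely algebraic in $\Delta_gf$, $\langle\nabla^2_gf,Ric_g\rangle$ and $\sigma_2(g)f$, so substituting \eqref{CPE} and \eqref{eqlaplacian} gives the clean scalar identity $\tr_g\Lambda_g^*(f)=\tfrac{n-2+nf}{2}|\mathring{Ric_g}|^2$ (Lemma \ref{mainlema}); since $\Lambda_g^*f=0$ forces this to vanish, one concludes $\mathring{Ric_g}=0$. Note also that the scalar factor which then has to be ruled out is $n-2+nf$, i.e.\ the level set $\{f=\tfrac{2-n}{n}\}$, not $\{f=-1\}$, so the measure-zero result you invoke for $\{f=-1\}$ is aimed at the wrong set; the correct way to dispose of it is that on any open set where $f\equiv\tfrac{2-n}{n}$ one has $\nabla^2_gf=0$, and then \eqref{eqlaplacian} together with $R_g>0$ from \eqref{rpositive} gives a contradiction, so $\mathring{Ric_g}$ vanishes on a dense set and hence everywhere.
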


An immediate consequence is the following


\begin{corollary}\label{cmain}
 Let $(M^n,g)$ be a closed, oriented  $n$-dimensional manifold, with $n\geq 3$. Let $(g,f)$ be a non-trivial solution of \eqref{CPE}. If $f\in \ker \Lambda_g^*$, then $(M^n,g)$ is isometric to the round sphere with radius $r=\left(\frac{n(n-1)}{R_g}\right)^{\frac{1}{2}}$ and $f$ is an eigenfunction of the Laplacian associated to the first eigenvalue $\frac{R_g}{n-1}$ on $\mathbb{S}^n(r)$. Hence $\dim\ker \Lambda_g^*=n+1$ and $\displaystyle\int_M fdv_g=0.$ 
\end{corollary}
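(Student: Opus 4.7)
The plan is to derive an explicit formula for $\Lambda_g^* f$ and then combine it with the CPE equation in both directions. Recall that $\sigma_2(g) = \tfrac{1}{2}\bigl[(\tr_g P_g)^2 - |P_g|^2\bigr]$ where $P_g$ is the Schouten tensor, so $\sigma_2(g) = \alpha_n R_g^2 + \beta_n |Ric_g|^2$ for explicit dimensional constants $\alpha_n, \beta_n$. I would first compute the linearization $\Lambda_g(h)$ using the classical formulas for the variation of $R_g$ and $|Ric_g|^2$, and then integrate by parts against $f$ in the pairing $\int_M \Lambda_g(h) f\, dv_g$. Using $\delta(|Ric_g|^2) = 2\langle Ric_g, \delta Ric_g(h)\rangle - 2\langle h, Ric_g\circ Ric_g\rangle$, this yields a second-order expression
\begin{equation*}
 \Lambda_g^* f \;=\; 2\alpha_n\, \gamma_g^*(R_g f) \;+\; 2\beta_n\,L_g^*(f\,Ric_g) \;-\; 2\beta_n f\, Ric_g\circ Ric_g,
\end{equation*}
where $L_g^*$ is the $L^2$-formal adjoint of the Ricci linearization and $(Ric_g\circ Ric_g)_{ij} := R_{ik}\,g^{kl}R_{lj}$. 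Since a CPE metric has $R_g$ constant, $\gamma_g^*(R_g f) = R_g\,\gamma_g^* f$, which simplifies the leading term.

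For the direction $(M^n,g)$ Einstein $\Longrightarrow f\in\ker\Lambda_g^*$: the discussion preceding the theorem already shows that an Einstein CPE metric with non-constant $f$ is isometric to the round sphere $\mathbb{S}^n(r)$ and that $f$ satisfies both \eqref{eq5} and \eqref{eqlaplacian}. On $\mathbb{S}^n(r)$ one has $Ric_g = \tfrac{R_g}{n}g$ with $R_g$ constant, so $Ric_g\circ Ric_g$ and $L_g^*(f\,Ric_g)$ both collapse to expressions involving only $\nabla^2_g f$, $\Delta_g f$, $f$, and multiples of $g$. Using \eqref{eqlaplacian} to eliminate $\Delta_g f$, the three terms of $\Lambda_g^* f$ combine to a scalar multiple of $\nabla^2_g f + \tfrac{R_g}{n(n-1)}fg$, which is zero by \eqref{eq5}. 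Hence $f\in\ker\Lambda_g^*$.

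For the converse ($f\in\ker\Lambda_g^*\Longrightarrow(M^n,g)$ Einstein): I would substitute the CPE relation \eqref{eqcpen}, namely $\nabla^2_g f = (1+f)\mathring{Ric_g} - \tfrac{R_g f}{n(n-1)}g$, directly into the formula for $\Lambda_g^* f$. Since $R_g$ is constant we have $\nabla R_g = 0$ and, by the contracted Bianchi identity, $\delta_g Ric_g = \tfrac{1}{2}dR_g = 0$, so all derivative terms involving $\nabla R_g$ or $\delta_g Ric_g$ drop out of $L_g^*(f\,Ric_g)$. The goal is to show that the surviving terms collapse to an identity of the form $(1+f)\,T(\mathring{Ric_g}) = 0$, where $T$ is a tensorial expression linear in $\mathring{Ric_g}$; contracting with $\mathring{Ric_g}$ (and, if necessary, integrating over $M$ to use that $R_g > 0$ from \eqref{rpositive}) should then yield $(1+f)|\mathring{Ric_g}|^2 = 0$. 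Since the set $\{f=-1\}$ has zero $n$-dimensional measure by the results of \cite{hwang2013three,neto2015note} cited in the introduction, $\mathring{Ric_g}$ vanishes on a dense open set and hence everywhere, so $(M^n,g)$ is Einstein.

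The main obstacle is the algebraic bookkeeping needed to compute $L_g^*$ explicitly and to show that, under the two CPE constraints ($R_g$ constant and \eqref{eqcpen}), the various second-order and zeroth-order contributions to $\Lambda_g^* f$ cancel down to a single sign-definite expression in $\mathring{Ric_g}$; the delicate point is ensuring that the $(1+f)$ factor emerges uniformly so that the measure-zero argument for $\{f=-1\}$ applies. Once this is done, Corollary \ref{cmain} is immediate: an Einstein CPE metric with non-constant $f$ is the round sphere by Tashiro's theorem, \eqref{eqlaplacian} identifies $f$ as a first eigenfunction of $\Delta_g$, the dimension count for $\ker\Lambda_g^*$ follows from the dimension of the first eigenspace on $\mathbb{S}^n(r)$, and $\int_M f\,dv_g = 0$ is the $L^2$-orthogonality of first eigenfunctions to the constants.
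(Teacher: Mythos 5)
Your plan leaves the decisive step unproved, and the form you predict for it is not what the computation actually produces. The heart of the corollary is the implication ``$f\in\ker\Lambda_g^*$ plus \eqref{CPE} forces $\mathring{Ric_g}=0$'', and you yourself flag the required cancellation as the ``main obstacle'': nothing in the proposal verifies that the second-order terms in $\Lambda_g^*f$ collapse, under \eqref{eqcpen} and $R_g$ constant, to an expression with a clean $(1+f)$ factor. In fact they do not: the paper only takes the \emph{trace} of $\Lambda_g^*f$ (a far lighter computation, using \eqref{eq003}, \eqref{eqlaplacian} and \eqref{eq000}) and obtains
\begin{equation*}
\tr_g\Lambda_g^*(f)=\Bigl(\tfrac{n-2+nf}{2}\Bigr)\,\bigl|\mathring{Ric_g}\bigr|^2 ,
\end{equation*}
so the factor that appears is $n-2+nf$, not $1+f$. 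Consequently the measure-zero fact about the set $\{f=-1\}$ that you plan to invoke is not the relevant tool; the level set to be ruled out is $\{f=-(n-2)/n\}$, and one needs a separate (easy, but different) argument --- e.g.\ that $f$ cannot be constant on an open set, using \eqref{eqlaplacian} together with $R_g>0$ from \eqref{rpositive} and \eqref{eqcpen} --- to conclude $\mathring{Ric_g}\equiv0$. As written, your converse direction is a hope about how the bookkeeping will turn out, and the hoped-for structure is inconsistent with the trace identity above, so there is a genuine gap.

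Two further points. First, the final assertions are obtained in the paper by quoting the rigidity theorem of \cite{silvaandrade2018} for closed $\sigma_2$-singular Einstein manifolds with $\sigma_2>0$ (Theorem \ref{isometricsphere}), after noting $\sigma_2=\tfrac{(n-2)^2}{8n(n-1)}R_g^2>0$; your alternative route via Tashiro's theorem and \eqref{eqlaplacian} does give the sphere, the eigenfunction statement and $\int_M f\,dv_g=0$, but the claim $\dim\ker\Lambda_g^*=n+1$ does not ``follow from the dimension of the first eigenspace'' without an argument: you must show that on the round sphere $\ker\Lambda_g^*$ coincides with the first eigenspace, which requires the Einstein-case formula \eqref{eq0002} for $\Lambda_g^*$ (in particular that constants are excluded because $R_g>0$) and an Obata-type identification. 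Second, the forward direction you sketch (Einstein $\Rightarrow f\in\ker\Lambda_g^*$) is not needed for this corollary at all; it belongs to Theorem \ref{main}. If you repair the converse by working with $\tr_g\Lambda_g^*(f)$ as above and then either cite Theorem \ref{isometricsphere} or supply the kernel computation on the sphere, the proof closes.
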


Moreover, we prove that if $(M^n,g)$ is a closed Riemannian manifold and $\ker\Lambda_g^*\cap\ker\gamma_g^*\neq\{0\}$, then $(M^n,g)$ is an Einstein manifold. Thus, it is isometric to the standard sphere $\mathbb{S}^n.$ 

\begin{theorem}\label{t002} Let $(M^n,g,f)$ be an $n$-dimensional closed static vacuum space, $n\geq 3$. $(M^n,g)$ is Einstein if and only if the space $(M^n,g,f)$ is $\sigma_2$-singular. If $f$ is a non-constant function $(M^n,g)$ has to be isometric to the standard sphere $\mathbb{S}^n$, in the other case $(M^n,g)$ has to be Ricci flat.
\end{theorem}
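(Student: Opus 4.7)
The plan is to adapt the strategy suggested by Theorem \ref{main}, using the simpler equation $\gamma_g^*f=0$ in place of the CPE condition. I split the argument according to whether the potential $f$ is constant.

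If $f$ is a nonzero constant, then $\nabla_g^2 f=0$ and $\Delta_g f=0$, so the vacuum static equation collapses to $fRic_g=0$ and $Ric_g\equiv 0$. The space is Ricci flat, hence Einstein. For constant $f$, the expression $\Lambda_g^*f$ also reduces to $f$ times a tensorial combination of Ricci and scalar curvature---the $\sigma_2$-analog of the Fischer--Marsden obstruction tensor---which vanishes identically on any Ricci flat manifold, so $f\in\ker\Lambda_g^*$ automatically. This settles both directions in the constant case and yields the Ricci flat conclusion.

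Now suppose $f$ is non-constant. Taking the trace of $\gamma_g^*f=0$ gives $\Delta_g f=-\frac{R_g}{n-1}f$, and multiplying by $f$ and integrating yields $\int_M|\nabla_g f|^2\,dv_g=\frac{R_g}{n-1}\int_M f^2\,dv_g>0$, so $R_g>0$. The central step is to substitute the vacuum static identity $\nabla_g^2 f=(\Delta_g f)g+fRic_g$ into the equation $\Lambda_g^*f=0$. Here one uses the explicit form of $\Lambda_g^*f$, obtained by integrating by parts the linearization of
\begin{equation*}
\sigma_2(g)=\frac{R_g^2}{8n(n-1)}-\frac{|\mathring{Ric}_g|^2}{2(n-2)^2},
\end{equation*}
which is a second-order linear operator combining $\nabla_g^2 f$, $(\Delta_g f)g$, $fRic_g$ and $fR_gg$ with dimension-dependent coefficients. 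After the substitution the Hessian disappears, the Laplacian is replaced by $-\frac{R_g}{n-1}f$, and the resulting equation becomes a purely algebraic identity in $f$, $g$, $Ric_g$ and $R_g$. Decomposing into trace and traceless parts, the traceless component should take the form $f\,\psi(R_g)\,\mathring{Ric}_g=0$ with $\psi(R_g)\neq 0$ for $R_g>0$, forcing $\mathring{Ric}_g\equiv 0$ on the dense open set where $f\neq 0$ and hence everywhere. The resulting Einstein vacuum static equation then reduces to Obata's equation $\nabla_g^2 f=-\frac{R_g}{n(n-1)}fg$, and Tashiro's theorem yields $(M^n,g)\cong\mathbb{S}^n$. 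Conversely, on the round sphere with $f$ a first Laplacian eigenfunction, the vanishing of $\mathring{Ric}_g$ together with the Obata relation reduces $\Lambda_g^*f$ to a scalar multiple of $\bigl(\Delta_g f+\frac{R_g}{n-1}f\bigr)g$, which vanishes, so $f\in\ker\Lambda_g^*$.

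The main obstacle is the careful derivation of $\Lambda_g^*f$ and the bookkeeping of dimensional constants during the substitution: one must verify that the scalar coefficient $\psi(R_g)$ surviving in front of the traceless Ricci term is genuinely nonzero in every dimension $n\geq 3$. This is not obvious a priori and requires explicit tracking of the coefficients arising from the Schouten-tensor decomposition of $\sigma_2$ as well as some index gymnastics to perform the trace-traceless split cleanly.
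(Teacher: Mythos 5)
Your overall skeleton matches the paper's proof (split on whether $f$ is constant; for constant $f$ the static equation gives $Ric_g\equiv 0$ directly; for non-constant $f$ deduce $\Delta_g f=-\frac{R_g}{n-1}f$ and $R_g>0$, reduce the $\sigma_2$-singular condition to $\mathring{Ric_g}\,f=0$, use density of $\{f\neq 0\}$, then conclude with the Obata-type equation and Tashiro), but the central step of the converse direction has a genuine gap. You describe $\Lambda_g^*f$ as a second-order operator combining only $\nabla^2_gf$, $(\Delta_gf)g$, $fRic_g$ and $fR_gg$, and on that basis claim that after substituting the static equation the full tensor equation $\Lambda_g^*f=0$ becomes purely algebraic, with traceless part $f\,\psi(R_g)\,\mathring{Ric_g}$. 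That description of $\Lambda_g^*$ is valid only on Einstein manifolds (it is precisely the Einstein-case formula \eqref{eq0002}, i.e.\ Lemma 2 of \cite{silvaandrade2018}, which the paper uses for the forward implication), so invoking it while trying to prove Einstein is circular. In general
\[
\Lambda_g^*(f)=\tfrac{1}{2}\Delta_g(fRic_g)+\tfrac{1}{2}\delta^2(fRic_g)g+\delta^*\delta(fRic_g)+f\mathring{R}(Ric_g)-\tfrac{n}{4(n-1)}\bigl(\Delta_g(fR_g)g-\nabla^2(fR_g)+fR_gRic_g\bigr),
\]
which contains $f\Delta_gRic_g$, terms in which $\nabla Ric_g$ is contracted with $\nabla f$, and $\delta^*\bigl(Ric_g(\nabla f,\cdot)\bigr)$; these curvature-derivative terms do not disappear after substituting $\nabla^2_gf=(\Delta_gf)g+fRic_g$, so the substituted traceless part is not of the algebraic form $f\psi(R_g)\mathring{Ric_g}$, and the nonvanishing of $\psi$ is never established---you yourself flag this verification as the ``main obstacle'' and leave it open.

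The repair is exactly what the paper does: use only the trace. By \eqref{eq003}, $\tr_g\Lambda_g^*(f)=\frac{2-n}{4}R_g\Delta_gf+\frac{n-2}{2}\langle\nabla^2f,Ric_g\rangle-2\sigma_2(g)f$ contains no derivatives of curvature, and inserting $\nabla^2f=f\bigl(Ric_g-\frac{R_g}{n-1}g\bigr)$ and $\Delta_gf=-\frac{R_g}{n-1}f$ yields $\tr_g\Lambda_g^*(f)=\frac{n}{2}\bigl|Ric_g-\frac{R_g}{n}g\bigr|^2f$; hence $\Lambda_g^*f=0$ forces $\mathring{Ric_g}=0$ wherever $f\neq 0$, and the density of $\{f\neq 0\}$ (which you assert but should justify, e.g.\ via the Fischer--Marsden fact that $df\neq 0$ on $f^{-1}(0)$ for a static potential) gives Einstein everywhere. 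The remainder of your argument is fine: the constant case giving Ricci flat, the Obata/Tashiro conclusion for non-constant $f$, and the forward implication via the Einstein-case formula for $\Lambda_g^*$. A minor point: your displayed formula for $\sigma_2$ uses the standard Schouten normalization and differs from the paper's \eqref{eq000} by the factor $(n-2)^2$; this rescales $\Lambda_g^*$ but not its kernel, so it is harmless.
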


Lin and Yuan has proved  in \cite{lin2016deformations} results about deformation of $Q$-curvature involving the kernel of $\Gamma^*_g,$ where $\Gamma_g:S_2(M)\to C^{\infty}(M)$ is the linearization of the $Q$-curvature and $\Gamma^*_g$ is the $L^2$-formal adjoint of the operator $\Gamma_g.$ In particular, they proved rigidity and others results, including an analogous result to our Theorem \ref{t002} in the context of $Q$-singular spaces.

We can summarize the above results in the following assertions.

\begin{corollary}\label{cor2}
Let $(M^n,g)$ be an $n$-dimensional closed oriented manifold with $n\geq 3$ and $f$ be a non-constant function defined in $M.$ We consider the following statements:

\begin{enumerate}[i)]
 \item $(M^n,g,f)$ is a CPE metric;
 \item $(M^n,g,f)$ is a vacuum static space;
 \item $(M^n,g,f)$ is a $\sigma_2$-singular space.
\end{enumerate}
If we choose any two statements we have that $(M^n,g)$ is an Einstein manifold, more specifically, $(M^n,g)$ is isometric to the standard sphere $\mathbb{S}^n$. In particular, if any two statements are true, the other one is also true.
\end{corollary}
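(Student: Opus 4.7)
The plan is to exploit the observation that Theorems \ref{main} and \ref{t002} are biconditionals: under either hypothesis (i) or (ii), the Einstein property is equivalent to the $\sigma_2$-singular property (iii). Moreover, on an Einstein manifold $\mathring{Ric_g} = 0$, so the right-hand side of \eqref{CPE} vanishes and conditions (i) and (ii) coincide. Thus, once Einstein is established in any pairing, the missing third statement will follow essentially for free, and only the sphere rigidity needs to be harvested in each case.

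For the pairing (i)+(iii), I would simply invoke Theorem \ref{main} to conclude that $(M^n,g)$ is Einstein; then $\mathring{Ric_g}=0$ combined with \eqref{CPE} gives $\gamma^*_g f = 0$, which is (ii). The isometry with the round sphere is not new information: as explained in the discussion following \eqref{eq5}, a non-constant CPE potential on an Einstein manifold satisfies Tashiro's equation $\nabla^2_g f + \frac{R_g}{n(n-1)} f g = 0$ with $R_g>0$, so \cite{tashiro1965complete} delivers $\mathbb{S}^n(r)$. For the pairing (ii)+(iii), Theorem \ref{t002} gives Einstein directly and, since $f$ is non-constant, excludes the Ricci-flat alternative; the identities $\mathring{Ric_g}=0$ and $\gamma^*_g f = 0$ together read precisely as \eqref{CPE}, establishing (i).

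The only pairing that calls for an honest computation is (i)+(ii). I would subtract the two expressions for $\gamma^*_g f$ supplied by \eqref{CPE} and \eqref{eqvacu} to obtain $\mathring{Ric_g}=0$, so $(M^n,g)$ is Einstein with $Ric_g = (R_g/n)g$. Substituting this back into the vacuum static equation and taking the trace gives $\Delta_g f = -\frac{R_g}{n-1} f$; reinsertion then produces $\nabla^2_g f + \frac{R_g}{n(n-1)} f g = 0$. Non-constancy of $f$ together with \eqref{rpositive} forces $R_g>0$, and Tashiro's theorem again identifies $(M^n,g)$ with $\mathbb{S}^n(r)$ for $r = \sqrt{n(n-1)/R_g}$. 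The reverse implication of Theorem \ref{main}, now that CPE and Einstein both hold, provides (iii) and closes the triangle.

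The main obstacle is purely organizational: one must track carefully which direction of the biconditionals in Theorems \ref{main} and \ref{t002} is being used at each step, and ensure that the non-constancy hypothesis on $f$ is invoked exactly where it rules out the Ricci-flat or trivial alternatives (respectively in Theorem \ref{t002} and in the passage from $f\equiv\text{const}$ to $f=0$ noted after \eqref{eqcpen}). No analytic input beyond Tashiro's rigidity theorem and the two theorems already established in the paper is needed.
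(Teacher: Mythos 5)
Your proposal is correct and follows essentially the same route as the paper: the same three pairwise cases, resolved by Theorem \ref{main} (with Corollary \ref{cmain}/Tashiro's rigidity for the sphere), Theorem \ref{t002}, and, for the pairing i)+ii), subtraction of \eqref{CPE} and \eqref{eqvacu} to get $\mathring{Ric_g}=0$. Your treatment is in fact slightly more explicit than the paper's, since you spell out how the third statement is recovered in each case.
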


In \cite{silvaandrade2018} was proved that if $(M^n,g)$ is a closed Einstein manifold with negative scalar curvature, then $(M^n,g)$ can not be a $\sigma_2$-singular space. In this way, if we  consider in the Corollary \ref{cor2} a more restrictive condition $iii)$, imposing that $R_g\neq0$, we can rewrite it as the following.

\begin{corollary}\label{cor3}
Let $(M^n,g)$ be an $n$-dimensional closed oriented manifold with $n\geq 3$ and $f$ be a non-constant function defined in $M.$ We consider the following statements:

\begin{enumerate}[i)]
 \item $(M^n,g,f)$ is a CPE metric.
 \item $(M^n,g,f)$ is a vacuum static space.
 \item $(M^n,g,f)$ is a $\sigma_2$-singular space with $R_g\neq0.$
 \item $(M^n,g)$ is an Einstein manifold.
\end{enumerate}
If we choose any two statements, we have that $(M^n,g)$ is isometric to the standard sphere $\mathbb{S}^n$. In particular, if any two statements are true, the others are also true.
\end{corollary}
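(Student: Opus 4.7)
The plan is to check all six pairs among (i)--(iv) and confirm that each one forces $(M^n,g)\cong\mathbb{S}^n$; the ``in particular'' clause then follows from Corollary \ref{cmain}, which identifies the admissible $f$ on $\mathbb{S}^n$ as a first Laplace eigenfunction, and such an $f$ satisfies all four statements simultaneously. Five of the six pairs---namely (i)+(ii), (i)+(iii), (ii)+(iii), (i)+(iv), (ii)+(iv)---will be handled either by direct appeal to Corollary \ref{cor2} or by a short manipulation that reduces them to the already-known Obata equation \eqref{eq5}, and the constraint $R_g\neq 0$ newly imposed on (iii) is automatically met in each case.

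Explicitly: in (i)+(iii) the inequality \eqref{rpositive} gives $R_g>0$; in (ii)+(iii) the non-constancy of $f$ combined with Theorem \ref{t002} directly delivers $\mathbb{S}^n$, ruling out the Ricci-flat alternative and so forcing $R_g>0$. For the pairs (i)+(ii), (i)+(iv), (ii)+(iv) one first derives $\mathring{Ric_g}=0$: in (i)+(ii) this comes from equating $\gamma_g^*f=0$ with $\gamma_g^*f=\mathring{Ric_g}$, while in (i)+(iv) and (ii)+(iv) it is just statement (iv). One then substitutes the Einstein condition $Ric_g=\frac{R_g}{n}g$ into \eqref{eqvacu} or \eqref{CPE}, traces to obtain $\Delta_g f=-\frac{R_g}{n-1}f$, and substitutes back to land on the Obata equation
\begin{equation*}
\nabla^2_g f+\frac{R_g f}{n(n-1)}\,g=0.
\end{equation*}
Exactly as in \eqref{rpositive}, pairing with $f$ and integrating forces $R_g>0$, and Tashiro's theorem \cite{tashiro1965complete} concludes $(M^n,g)\cong\mathbb{S}^n(r)$ with $r=\sqrt{n(n-1)/R_g}$.

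The delicate case is (iii)+(iv). Here the Einstein hypothesis combined with the non-existence result of \cite{silvaandrade2018} for closed Einstein $\sigma_2$-singular spaces of negative scalar curvature rules out $R_g<0$, and together with $R_g\neq 0$ this pins $R_g>0$. The key step is then to simplify the operator $\Lambda_g^*$ on an Einstein background: since $\mathring{Ric_g}=0$, the Schouten tensor collapses to the pure trace $A=\frac{R_g}{2n(n-1)}g$ and the quadratic cross-terms in the linearization of $\sigma_2=\tfrac12[(\tr A)^2-|A|^2]$ degenerate, so that $\Lambda_g^*f$ is expected to reduce, up to a nonzero scalar multiple depending only on $n$ and $R_g$, to the vacuum static operator $\gamma_g^*f$. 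Once this identification is verified, $\Lambda_g^*f=0$ becomes equivalent to $\gamma_g^*f=0$, i.e.\ (iii) collapses to (ii), and the (ii)+(iv) case already settled finishes the proof.

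The main obstacle will be precisely this last identification---showing that $\Lambda_g^*|_{\mathrm{Einstein}}$ is a nonzero scalar multiple of $\gamma_g^*$ whenever $R_g\neq 0$. The computation should follow from the explicit formula for $\Lambda_g^*$ already used in the proofs of Theorems \ref{main} and \ref{t002}, specialized to metrics with $\mathring{Ric_g}=0$, and the role of the sharpened hypothesis $R_g\neq 0$ is exactly to keep the resulting proportionality constant away from zero, thereby preventing any loss of information when transferring between the two singular conditions.
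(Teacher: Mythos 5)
Your proposal is correct and follows essentially the same route as the paper: a pairwise case analysis reducing each combination to the Obata-type equation \eqref{eq5} (hence to Tashiro's theorem via $R_g>0$), with the pair (iii)+(iv) handled by the Einstein-background simplification of $\Lambda_g^*$. The identification you flag as the ``main obstacle'' is exactly Lemma 2 of \cite{silvaandrade2018}, already displayed in the proof of Theorem \ref{main}: for Einstein $g$ one has $\Lambda_g^*(f)=\frac{R_g(n-2)^2}{4n(n-1)}\left(\nabla^2f-(\Delta_gf)g-\frac{R_g}{n}fg\right)=\frac{R_g(n-2)^2}{4n(n-1)}\gamma_g^*(f)$, whose coefficient is nonzero precisely because $R_g\neq0$ and $n\geq 3$.
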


%

 If $(M^n,g)$ is a closed Einstein manifold with $R_g\neq0$, this result allow us to characterize those functions $f$ that appears both in CPE, vaccum static or $\sigma_2$-singular conditions, in fact for nonzero $f$ they are the same.

\section{Background and proofs}

We start this section recalling some important results  about $\sigma_2$-curvature. Then we present the proofs.

Let $(M^n,g)$ be an $n$-dimensional Riemanian manifold, $n\geq 3$. The $\sigma_2$-curvature, which will be denoted by $\sigma_2(g),$ is as a nonlinear map $\sigma_2:\mathcal{M}\to C^{\infty}(M),$ defined as the second elementary symmetric  function of the eigenvalue of the Schouter tensor $A_g=Ric_g-\frac{R_g}{2(n-1)}g$. In this case, we obtain that

\begin{equation}\label{eq000}
\sigma_2(g)=-\dfrac{1}{2}|Ric_g|^2+\dfrac{n}{8(n-1)}R_g^2.
\end{equation}

Motivated by works of Fischer and Marsden \cite{fischer1975deformations} and Lin and Yuan \cite{lin2016deformations}, in \cite{silvaandrade2018} was proved that the linearization of the $\sigma_2$-curvature at the metric $g$, $$\Lambda_g:S_2(M)\to C^{\infty}(M),$$ is given by
\begin{eqnarray*}
\Lambda_g(h) & = & \frac{1}{2}\left\langle Ric_g,\Delta_gh+\nabla^2tr_gh+2\delta^*\delta h+2\mathring{R}(h)\right\rangle\\
& & -\frac{n}{4(n-1)}R_g\left(\Delta_g tr_gh-\delta^2h+\langle Ric,h\rangle\right),
 \end{eqnarray*}
where $\delta^*$ is the $L^2$-formal adjoint of $\delta$ and $\mathring{R}(h)_{ij}=g^{kl}g^{st}R_{kijs}h_{lt}.$

Thus, its $L^2$-formal adjoint, $\Lambda_g^*:C^{\infty}(M)\to S_2(M)$, is 
\begin{eqnarray*}
\begin{array}{rcl}
\Lambda_g^*(f) & = & \displaystyle\frac{1}{2}\Delta_g(fRic_g)+\frac{1}{2}\delta^2(fRic_g)g+\delta^*\delta(fRic_g)+f\mathring{R}(Ric_g)\\
& & -\displaystyle\frac{n}{4(n-1)}\left(\Delta_g(fR_g)g-\nabla^2(fR_g)+fR_gRic_g\right).
\end{array}
\end{eqnarray*}

This implies that
 \begin{equation}\label{eq003}
  \tr_g\Lambda_g^*(f)=\frac{2-n}{4}R_g\Delta_gf+\frac{n-2}{2}\langle \nabla^2f,Ric_g\rangle-2\sigma_2(g)f.
 \end{equation}

Note that,
 \begin{eqnarray}\label{eq004}
 \Lambda_g^*(1)&=&\frac{1}{2}\Delta_gRic_g-\frac{1}{4(n-1)}(\Delta_gR_g)g+\frac{2-n}{4(n-1)}\nabla^2R_g\nonumber\\
 &+&\mathring{R}(Ric_g)-\frac{n}{4(n-1)}R_gRic_g.
 \end{eqnarray}
 
Then, by (\ref{eq003}) and (\ref{eq004})  we obtain
\begin{equation}\label{eqn005}
\tr_g\Lambda_g^*(1)=-2\sigma_2(g)
\end{equation}
and
\begin{equation}\label{eqn006}
div_g \Lambda_g^*(1)=-\frac{1}{2}d\sigma_2(g).
\end{equation}

The relations (\ref{eqn005}) and (\ref{eqn006}) are similar to the relations between the Ricci tensor and the scalar curvature, namely $R_g=\tr_gRic_g$ and $div_gRic_g=\frac{1}{2}dR_g$. 

In \cite{silvaandrade2018} was introduced the notion of $\sigma_2$-singular space, which has the $L^2$-formal adjoint of the linearization of the $\sigma_2$-curvature map with nontrivial kernel, and under certain hypotheses it was proved rigidity and others results.

 \begin{definition}\label{def001}
A complete Riemannian manifold $(M,g)$ is $\sigma_2$-singular if 
$$ \ker \Lambda_g^*\not=\{0\},$$
 where $\Lambda_g^*: C^{\infty}(M)\to S_2(M)$ is the $L^2$-formal adjoint of $\Lambda_g$. We will call the triple $(M^n,g,f)$ as a $\sigma_2$-singular space  if $f$ is a nontrivial function in $\ker \Lambda_g^*.$
\end{definition}
%

\begin{theorem}[\cite{silvaandrade2018}] \label{isometricsphere} 
Let $(M^n,g,f)$ be a closed $\sigma_2$-singular Einstein manifold with positive $\sigma_2$-curvature. Then $(M^n,g)$ is isometric to the round sphere with radius $r=\left(\frac{n(n-1)}{R_g}\right)^{\frac{1}{2}}$ and $f$ is an eigenfunction of the Laplacian associated to the first eigenvalue $\frac{R_g}{n-1}$ on $\mathbb{S}^n(r)$. Hence dim $\ker \Lambda_g^*=n+1$ and $\displaystyle\int_M fdv_g=0.$
\end{theorem}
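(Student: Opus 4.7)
\textbf{Proof plan for Theorem \ref{isometricsphere}.} The strategy is to evaluate the formula for $\Lambda_g^*(f)$ under the Einstein assumption, show that the identity $\Lambda_g^*(f)=0$ is equivalent to Tashiro's equation, and then invoke the classical rigidity theorem of Tashiro.

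First I would simplify each building block of $\Lambda_g^*(f)$ using $Ric_g = \tfrac{R_g}{n} g$ (with $R_g$ constant). Writing $fRic_g = \tfrac{R_g}{n} fg$, the elementary identities $\delta_g(fg) = -df$, $\delta_g^2(fg) = \Delta_g f$, $\delta_g^*\delta_g(fg) = -\nabla_g^2 f$, together with $\mathring{R}(g)=Ric_g$ (hence $\mathring R(Ric_g) = \tfrac{R_g^2}{n^2} g$ in the Einstein case), turn every summand into a linear combination of $\nabla_g^2 f$, $\Delta_g f\,g$ and $fg$. After collecting the coefficients, a direct computation gives the uniform expression
\begin{equation*}
\Lambda_g^*(f) \;=\; \frac{(n-2)^2 R_g}{4n(n-1)}\left[\nabla_g^2 f - \Delta_g f\cdot g - \frac{R_g}{n}\,fg\right],
\end{equation*}
which I would cross-check against \eqref{eq003}, using that $\sigma_2(g) = \tfrac{(n-2)^2 R_g^2}{8n(n-1)}$ on an Einstein manifold.

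The hypothesis $\sigma_2(g) > 0$ then forces $R_g \neq 0$, so $\Lambda_g^*(f)=0$ is equivalent to the bracketed tensor equation. Taking its trace yields $\Delta_g f = -\tfrac{R_g}{n-1} f$; substituting this back cancels the $\Delta_g f\cdot g$ summand and leaves Tashiro's equation
\begin{equation*}
\nabla_g^2 f + \frac{R_g}{n(n-1)}\,f g \;=\; 0.
\end{equation*}
A nontrivial $f$ cannot be a nonzero constant because $\tr_g \Lambda_g^*(1)=-2\sigma_2(g)\neq 0$ by \eqref{eqn005}, so $f$ is non-constant. Pairing $\Delta_g f = -\tfrac{R_g}{n-1} f$ with $f$ and integrating over $M$ gives $\tfrac{R_g}{n-1}\!\int_M f^2\,dv_g = \int_M |\nabla_g f|^2\,dv_g > 0$, so $R_g > 0$.

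Finally, Tashiro's theorem \cite{tashiro1965complete} states that any complete manifold carrying a non-constant solution of $\nabla^2 f + cfg = 0$ with $c>0$ is isometric to the round sphere of radius $1/\sqrt{c}$. With $c = R_g/(n(n-1))$ this gives $r=\sqrt{n(n-1)/R_g}$, and the equation $\Delta_g f + \tfrac{R_g}{n-1}f = 0$ on $\mathbb{S}^n(r)$ identifies $f$ with a first Laplace eigenfunction (the first nonzero eigenvalue being $n/r^2 = R_g/(n-1)$). The first eigenspace on $\mathbb{S}^n(r)$ is the $(n{+}1)$-dimensional space of restrictions of linear coordinate functions from $\mathbb{R}^{n+1}$, each of which solves Tashiro's equation and therefore lies in $\ker \Lambda_g^*$ by reversing the reduction; all such eigenfunctions are $L^2$-orthogonal to constants. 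Hence $\dim \ker \Lambda_g^* = n+1$ and $\int_M f\,dv_g = 0$. The main technical obstacle is the bookkeeping in the first step: the uniform prefactor $(n-2)^2 R_g/(4n(n-1))$ emerges only after several cancellations, and in particular the $\mathring R(Ric_g)$ contribution must be correctly handled via $\mathring R(g)=Ric_g$ rather than discarded as trace-type.
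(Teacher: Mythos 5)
The paper itself offers no proof of Theorem \ref{isometricsphere}: it is imported verbatim from \cite{silvaandrade2018}, so there is nothing internal to compare against. Your argument is correct and follows the natural (and evidently the intended) route: the Einstein-case expression you derive for $\Lambda_g^*(f)$, with prefactor $\frac{(n-2)^2R_g}{4n(n-1)}$, is exactly the formula the paper quotes as Lemma 2 of \cite{silvaandrade2018} in the proof of Theorem \ref{main}, and from there the trace-and-substitute reduction to the Obata--Tashiro equation, the exclusion of nonzero constants via $\tr_g\Lambda_g^*(1)=-2\sigma_2(g)$, the integration argument giving $R_g>0$, and the identification of $\ker\Lambda_g^*$ with the $(n+1)$-dimensional first eigenspace of $\mathbb{S}^n(r)$ (hence $\int_M f\,dv_g=0$) all check out.
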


The next Lemma is crucial for our result.

\begin{lemma}\label{mainlema}
 Let $(M^n,g,f)$ be an $n$-dimensional CPE metric, then  $$\tr \Lambda_g^*(f)=\left(\dfrac{n-2+nf}{2}\right)|\mathring{Ric}|^2.$$
\end{lemma}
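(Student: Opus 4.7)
The plan is to substitute directly into the trace formula \eqref{eq003} and use the CPE equation to eliminate Hessian and Laplacian terms in favor of Ricci data, after which the algebra collapses.

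First I would take the trace of the CPE equation \eqref{CPE}. Since $\tr_g\mathring{Ric_g}=0$, tracing yields $\Delta_g f - n\Delta_g f - fR_g = 0$, hence
\begin{equation*}
\Delta_g f = -\frac{R_g}{n-1}\, f.
\end{equation*}
From the CPE equation itself I also obtain the Hessian identity
\begin{equation*}
\nabla^2_g f = \mathring{Ric_g} + (\Delta_g f)\, g + f\, Ric_g.
\end{equation*}
Pairing this with $Ric_g$, and using the elementary identity $\langle \mathring{Ric_g}, Ric_g\rangle = |\mathring{Ric_g}|^2$ (which follows from $\langle g, \mathring{Ric_g}\rangle = 0$), I get
\begin{equation*}
\langle \nabla^2_g f, Ric_g\rangle = |\mathring{Ric_g}|^2 + (\Delta_g f)\, R_g + f\,|Ric_g|^2.
\end{equation*}

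Next I would plug these two expressions into the trace formula \eqref{eq003},
\begin{equation*}
\tr_g\Lambda_g^*(f) = \frac{2-n}{4}R_g\Delta_g f + \frac{n-2}{2}\langle \nabla^2_g f, Ric_g\rangle - 2\sigma_2(g)\,f,
\end{equation*}
and expand $\sigma_2(g) = -\tfrac{1}{2}|Ric_g|^2 + \tfrac{n}{8(n-1)}R_g^2$ from \eqref{eq000}. After substitution the expression contains four types of terms: a multiple of $|\mathring{Ric_g}|^2$ (coming from the middle term), a term in $f\,|Ric_g|^2$, a term in $f\,R_g^2$, and no remaining Hessian or Laplacian. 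The scalar curvature squared pieces come from (i) the $\Delta_g f$ substitution, (ii) the $(\Delta_g f)R_g$ piece inside $\langle \nabla^2_g f, Ric_g\rangle$, and (iii) the $\sigma_2(g)$ term; the coefficients combine to give $-\tfrac{1}{2}R_g^2 f$. The $|Ric_g|^2$ pieces combine to give $\tfrac{n}{2}f\,|Ric_g|^2$.

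The last step is to recognize that
\begin{equation*}
\frac{n}{2}f\,|Ric_g|^2 - \frac{1}{2}R_g^2 f = \frac{nf}{2}\!\left(|Ric_g|^2 - \frac{R_g^2}{n}\right) = \frac{nf}{2}\,|\mathring{Ric_g}|^2,
\end{equation*}
so the total equals $\tfrac{n-2}{2}|\mathring{Ric_g}|^2 + \tfrac{nf}{2}|\mathring{Ric_g}|^2 = \tfrac{n-2+nf}{2}|\mathring{Ric_g}|^2$, as claimed. There is no real conceptual obstacle here; the only place to be careful is the bookkeeping of the three sources of $R_g^2 f$, since the required cancellation to produce $-\tfrac{1}{2}R_g^2 f$ (and not something messier) is what makes the final recombination into $|\mathring{Ric_g}|^2$ possible.
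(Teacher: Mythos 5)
Your proposal is correct and follows essentially the same route as the paper: trace the CPE equation to get $\Delta_g f=-\tfrac{R_g}{n-1}f$, solve for $\nabla^2_g f$, substitute into the trace formula \eqref{eq003} together with \eqref{eq000}, and collect the $R_g^2 f$ and $|Ric_g|^2 f$ terms into $\tfrac{nf}{2}|\mathring{Ric_g}|^2$. The bookkeeping you flag is exactly right (the coefficients $\tfrac{n-2}{4(n-1)}-\tfrac{n-2}{2(n-1)}-\tfrac{n}{4(n-1)}=-\tfrac12$), and in fact your version of the Hessian identity, $\nabla^2_g f=\mathring{Ric_g}+(\Delta_g f)g+fRic_g$, has the correct sign, whereas the displayed equation \eqref{eqcpe} in the paper's proof carries a sign typo that its subsequent computation silently corrects.
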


\begin{proof}
Since $(M^n,g,f)$ is an $n$-dimensional CPE metric, then $f$ satisfies the equation \eqref{CPE}
\begin{eqnarray}\label{eqcpe}
\nabla^2_gf=\mathring{Ric_g}-\left(Ric_g-\dfrac{R_g}{n-1}\right)f.
\end{eqnarray}
 Thus, by equations \eqref{eqlaplacian}, \eqref{eq000} and \eqref{eqcpe}, we get
 \begin{eqnarray*}
 \tr_g\Lambda_g^*(f)&=&\frac{2-n}{4}R_g\left(\dfrac{-R_g}{(n-1)}\right)f+\frac{n-2}{2}\left< \mathring{Ric_g}-\left(Ric_g-\dfrac{R_g}{n-1}\right)f,Ric_g\right>\\
 &+&\left(|Ric_g|^2-\dfrac{n}{4(n-1)}R_g^2\right)f\\
 &=&\dfrac{n-2}{2}|\mathring{Ric_g}|^2+\dfrac{n}{2}\left(|Ric_g|^2-\dfrac{R_g^2}{n}\right)f\\
 &=&\left(\dfrac{n-2+nf}{2}\right)|\mathring{Ric_g}|^2.
\end{eqnarray*}
This prove the result.
\end{proof}

%
%

\begin{proof}[Proof of Theorem \ref{main}]
We assume that $(M^n,g,f)$ is a CPE metric. First, we suppose that $(M^n,g)$ is Einstein, then by Lemma 2 in \cite{silvaandrade2018} we obtain that 
 \begin{equation*}\label{eq0002}
 \Lambda_g^*(f)=\frac{R_g(n-2)^2}{4n(n-1)}\left(\nabla^2f-(\Delta_gf)g-\frac{R_g}{n}fg\right).
\end{equation*}
So, by equation \eqref{eqlaplacian}, we get 
\begin{equation}\label{eq0002}
 \Lambda_g^*(f)=\frac{R_g(n-2)^2}{4n(n-1)}\left(\nabla^2f+\frac{R_g}{n(n-1)}fg\right).
\end{equation}
By hypothesis $(M^n,g)$ is Einstein, then using this in the equation \eqref{eqcpe} we obtain that the equation \eqref{eq5} is satisfied. Thus, by expression in \eqref{eq0002} we conclude that $f\in \ker\Lambda_g^*.$

Conversely, we assume that $(M^n,g,f)$ is a $\sigma_2$-singular space, i.e, $\Lambda_g^*(f)=0,$ in particular $\tr_g\Lambda_g^*(f)=0.$ Since $f$ is not a constant function and  $n\geq 3$, Lemma \ref{mainlema} implies that $|\mathring{Ric_g}|^2=0,$ thus $(M^n,g)$ is Einstein\footnote{Note that the condition $\Lambda_g^*(f)=0$ in the Theorem \ref{main} can be changed by the weaker one $\tr\Lambda_g^*(f)=0$.}
\end{proof}

\begin{proof}[Proof of Corollary \ref{cmain}]
 Let $(M^n,g, f)$ be an $n$-dimensional CPE metric. If $(M^n,g,f)$ is a $\sigma_2$-singular space, then by Theorem \ref{main}, we obtain that $(M^n,g)$ is Einstein, and in this case we obtain that $\sigma_2=\dfrac{(n-2)^2}{8n(n-1)}R_g^2>0.$ Thus, by Theorem  \ref{isometricsphere} $(M^n,g)$ is isometric to the round sphere with radius $r=\left(\frac{n(n-1)}{R_g}\right)^{\frac{1}{2}}$ and $f$ is an eigenfunction of the Laplacian associated to the first eigenvalue $\frac{R_g}{n-1}$ on $\mathbb{S}^n(r)$ with dim ker $\Lambda_g^*=n+1$ and $\displaystyle\int_M fdv_g=0.$\qed
\end{proof}

\begin{proof}[Proof of Theorem \ref{t002}] We assume that $(M^n,g,f)$ is a vacuum static space. First, we suppose that $(M^n,g)$ is Einstein, then $f$ satisfies the equation \eqref{eq0002} . Thus, $(M^n,g,f)$ is a $\sigma_2$-singular.  

Conversely, we assume that $(M^n,g,f)$ is a $\sigma_2$-singular space. By hypothesis the function $f$ satisfies the equation \eqref{eqvacu}. Thus, by \eqref{eq003} we obtain that
\begin{eqnarray*} 
\tr_g\Lambda_g^*(f)&=&\frac{2-n}{4}R_g\Delta_gf+\frac{n-2}{2}\langle \nabla^2f,Ric_g\rangle-2\sigma_2(g)f\\
&=& \frac{2-n}{4}R_g\Delta_gf+\frac{n-2}{2}\left\langle \left(Ric_g-\frac{R_g}{n-1}g\right) f ,Ric_g\right\rangle-2\sigma_2(g)f\\
&=&\frac{2-n}{4}R_g\Delta_gf+\frac{n-2}{2} \left(|Ric_g|^2-\frac{R_g^2}{n-1}\right)f-2\sigma_2(g)f,\\
\end{eqnarray*}

Taking the trace in the equation (\ref{eqvacu}) we have that 
$$\Delta_g f+\frac{R_g}{n-1}f=0$$ 
 and hence using \eqref{eq000} we get that
\begin{eqnarray*} 
\tr_g\Lambda_g^*(f)&=&-\frac{2-n}{4(n-1)}R_g^2f+\frac{n-2}{2} \left(|Ric_g|^2f-\frac{R_g^2}{n-1}f\right)-2\sigma_2(g)f,\\
&=&-\frac{1}{2}R_g^2f+\frac{n}{2}|Ric_g|^2f\\
&=&\frac{n}{2}\left(|Ric_g|^2-\frac{R_g^2}{n}\right)f\\
&=&\frac{n}{2}\left|Ric_g-\frac{R_g}{n}g\right|^2f.
\end{eqnarray*}

On the other hand, by hypothesis $f\in \ker\Lambda_g^*$, then
$$\frac{n}{2}\left|Ric_g-\frac{R_g}{n}g\right|^2f=0.$$
If $f$ is a nonzero constant function, then $(M^n,g)$ is Einstein and thus the $\sigma_2$-curvature is $\sigma_2=\dfrac{(n-2)^2}{8n(n-1)}R_g^2$. On the other hand, $\sigma_2=0$ due Theorem 1 in \cite{silvaandrade2018}, so $(M^n,g)$ is Ricci flat. If $f$ is non-constant, in a vacuum static space $df$ cannot vanish on $f^{-1}(0)$ (see the proof of the Theorem 1 in  \cite{fischer1975deformations}). Then $f^{-1}(0)$ is an $(n-1)$-dimensional submanifold of $M$, so $f$ vanishes only outside a dense subset of $M$, which implies that
\begin{eqnarray*}
Ric_g=\frac{R_g}{n}g.
\end{eqnarray*} 
 
Since $(M^n,g,f)$ is a vacuum static space and $f$ is a non-constant function, then $R_g>0$ and by equation \eqref{eq5} it follows that $(M^n,g)$ is isometric to the round sphere.

\end{proof}

\begin{proof}[Proof of Corollary \ref{cor2}] 
To get the conclusions we can choose conditions $i)$ and $ii)$, join equations \eqref{eqvacu} and \eqref{CPE} and invoke Theorem \ref{t002}, or we choose $i)$ and $iii)$ and use Theorem \ref{main} and Corollary \ref{cmain}, or we choose $ii)$ and $iii)$ and use Theorem \ref{t002}.
\end{proof}

\begin{proof}[Proof of Corollary \ref{cor3}] 
The combinations of conditions $i)$ and $ii)$, $i)$ and $iii)$ or $ii)$ and $iii)$ follow in the same way as in Corollary \ref{cor2}.

Using condition $iv)$ we can get the conclusions by combining with condition $i)$ and equation \eqref{eq5}, or combining with condition $ii)$ which give us condition $i)$ or, finally, combining with condition $iii)$ and using equation \eqref{eq0002} which give us equation \eqref{eq5}.
\end{proof}


\section*{Acknowledgement}
Maria Andrade thanks Ezequiel Barbosa, Pietro da Silva and Almir Silva Santos for their support and comments for this work. This work was done when Maria Andrade was visiting the Federal University of Minas Gerais in Belo Horizonte, supported by the grant PNPD/CAPES in 2019 and 2020. Maria Andrade would like to express her appreciation to its sponsorship and hospitality.

\bibliographystyle{abbrv}

\bibliography{main.bib}

\begin{thebibliography}{10}

\bibitem{ambrozio2017static}
L.~Ambrozio.
\newblock On static three-manifolds with positive scalar curvature.
\newblock {\em Journal of Differential Geometry}, 107(1):1--45, 2017.

\bibitem{barros2015critical}
A.~Barros, B.~Leandro, and E.~Ribeiro~Jr.
\newblock Critical metrics of the total scalar curvature functional on
  4-manifolds.
\newblock {\em Mathematische Nachrichten}, 288(16):1814--1821, 2015.

\bibitem{barros2014critical}
A.~Barros and E.~Ribeiro~Jr.
\newblock Critical point equation on four-dimensional compact manifolds.
\newblock {\em Mathematische Nachrichten}, 287(14-15):1618--1623, 2014.

\bibitem{besse2007einstein}
A.~L. Besse.
\newblock {\em Einstein manifolds}.
\newblock Springer Science \& Business Media, 2007.

\bibitem{fischer1975deformations}
A.~E. Fischer, J.~E. Marsden, et~al.
\newblock Deformations of the scalar curvature.
\newblock {\em Duke Mathematical Journal}, 42(3):519--547, 1975.

\bibitem{hwang2013three}
S.~Hwang.
\newblock Three dimensional critical point of the total scalar curvature.
\newblock {\em Bulletin of the Korean Mathematical Society}, 50(3):867--871,
  2013.

\bibitem{lin2016deformations}
Y.-J. Lin and W.~Yuan.
\newblock Deformations of q-curvature i.
\newblock {\em Calculus of Variations and Partial Differential Equations},
  55(4):101, 2016.

\bibitem{neto2015note}
B.~L. Neto.
\newblock A note on critical point metrics of the total scalar curvature
  functional.
\newblock {\em Journal of Mathematical Analysis and Applications},
  424(2):1544--1548, 2015.

\bibitem{silvaandrade2018}
A.~Silva~Santos and M.~Andrade.
\newblock Deformation of the {$\sigma_2$}-curvature.
\newblock {\em Ann. Global Anal. Geom.}, 54(1):71--85, 2018.

\bibitem{tashiro1965complete}
Y.~Tashiro.
\newblock Complete riemannian manifolds and some vector fields.
\newblock {\em Transactions of the American Mathematical Society},
  117:251--275, 1965.

\bibitem{yuan2015geometry}
W.~Yuan.
\newblock {\em The geometry of vacuum static spaces and deformations of scalar
  curvature}.
\newblock PhD thesis, UC Santa Cruz, 2015.

\end{thebibliography}

 Universidade Federal de Sergipe, Departamento de Matem\'{a}tica, 49100-000, S\~ao Crist\'ov\~ao, SE, Brasil. 
 
E-mail adresss: maria@mat.ufs.br
\end{document}